\newtheorem{theorem}{Theorem}[section]
\newtheorem{definition}[theorem]{Definition}
\newtheorem{lemma}[theorem]{Lemma}
\newtheorem{corollary}[theorem]{Corollary}
\numberwithin{equation}{section}
\begin{document}

\title{Topological Symmetry Groups of M\"{o}bius Ladders}
\date{\today}

\author{Erica Flapan and Emille Davie Lawrence}

 \subjclass{ 57M25, 57M15, 57M27, 92E10, 05C10}

\keywords{topological  symmetry groups, spatial graphs, molecular symmetries, M\"{o}bius ladders}

\address{Department of Mathematics, Pomona College, Claremont, CA 91711}
\email{eflapan@pomona.edu}
   \address{Department of Mathematics, University of San Francisco, San Francisco, CA 94117}
   \email{edlawrence@usfca.edu}

\thanks{The first author was supported in part by NSF grant DMS-0905087.}

\begin{abstract}  We classify all groups which can occur as the orientation preserving topological symmetry group of some embedding of a M\"{o}bius ladder graph in $S^3$.

\end{abstract}

\maketitle  

\pagenumbering{arabic} 

\section{Introduction}

In order to predict molecular behavior, it is useful to know the symmetries of a molecule. The group of rigid molecular symmetries, known as the {\it point group}, is a convenient way to represent this information.  This group is made up of the different rotations, reflections, and combinations of rotations and reflections which preserve a molecular structure in space.  However,  the larger a molecule is the more flexible it may be. In fact, macromolecules like DNA can be quite flexible.  Even some smaller molecules may be partially flexible or contain bonds around which a portion of the molecule can rotate.  Such a structure may have a symmetry that is induced by twisting or reflecting just part of the structure, while leaving the rest of the molecule fixed.    For example, on the left side of Figure~\ref{flex} we illustrate a molecular M\"{o}bius ladder which is flexible, and on the right side we illustrate a molecule that has rotating propellers on either end.  Not all of the symmetries of such molecules are included in the point group.  

\begin{figure}[htbp]
\begin{center}
\includegraphics[width=0.99\textwidth]{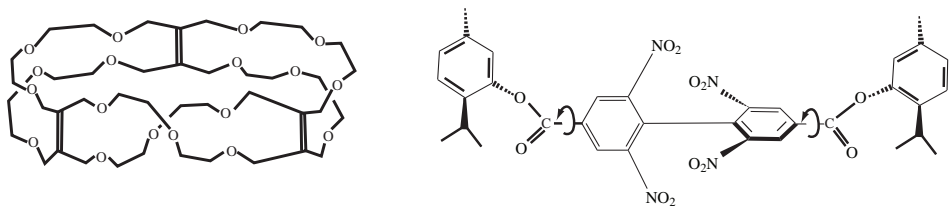}
\caption{The molecule on the left is flexible, while the one on the right has pieces which rotate.}
\label{flex}
\end{center}
\end{figure}

Thus it make sense to represent the symmetries of a non-rigid molecule by the group of automorphisms of the structure that can be induced by homeomorphisms rather than isometries of $\mathbb{R}^3$.  This group treats molecules as topological rather than geometric structures and hence we call this group of automorphisms the {\it topological symmetry group}.  Note that the topological symmetry group will always be finite because it is a subgroup of the automorphism group of a graph, even though the homeomorphisms that induce the automorphisms do not in general have finite order.

In this paper, we are interested in determining the topological symmetry groups of all embeddings of the family of M\"{o}bius ladders in $\mathbb{R}^3$. The molecular M\"{o}bius ladder, synthesized by Walba, Richards, and Haltiwanger ~\cite{WRH}, has the form of a three-runged ladder with its ends glued together with a half twist so that it resembles the boundary of a M\"{o}bius band (as illustrated in Figure~\ref{flex}). The rungs of the ladder are carbon-carbon double bounds, while the sides of the ladder consist of a long polyether chain.  This molecule is noteworthy because interest in it led to the development of the interdisciplinary field of topological stereochemistry.  In particular, in response to a question by the chemist David Walba who had synthesized the molecule, the topologist Jon Simon \cite{Si} proved that there is no orientation reversing homeomorphism of $\mathbb{R}^3$ taking the molecular M\"{o}bius ladder to itself in such a way a that rungs go to rungs and sides go to sides.  It followed from this result that even though the molecule is somewhat flexible, it must be chemically distinct from its mirror image.  Furthermore, Simon showed that no matter how many rungs are added to the molecular M\"{o}bius ladder, there would still be no orientation reversing homeomorphism of $\mathbb{R}^3$ taking the graph to itself.

While the motivation for defining the topological symmetry group came from studying non-rigid molecules, the study of symmetries of graphs in $\mathbb{R}^3$ is a natural extension of the study of symmetries of knots and links.  Furthermore, as is normally done in knot theory, we prefer to embed graphs in the 3-sphere $S^3=\mathbb{R}^3\cup \{\infty\}$ rather than in $\mathbb{R}^3$.  Note that an automorphism $f$ of a graph $\Gamma$ embedded in $\mathbb{R}^3$ is induced by a homeomorphism of $\mathbb{R}^3$ if and only if when $\Gamma$ is considered in $\mathbb{R}^3\subseteq S^3$ the automorphism $f$ is induced by a homeomorphism of $S^3$.  Thus the topological symmetry group of an embedded graph is unchanged if we consider the structure in $S^3$ rather than $\mathbb{R}^3$.  

In this paper, we consider embeddings of M\"{o}bius ladders that do not in general correspond to molecular structures.  As an abstract graph, a M\"{o}bius ladder with $n$ rungs, denoted by $M_{n}$, is a $2n$-gon with vertices at each corner and edges joining each antipodal pair of vertices.  There are infinitely many different ways to embed this graph in $S^3$, some containing knots, others containing twists between edges, and most looking nothing like the boundary of a M\"{o}bius band.  Flapan \cite{Fl} generalized Simon's result by proving that for every odd $n>3$ and any embedding $\Gamma$ of $M_n$ in $S^3$, there is no orientation reversing homeomorphism of $S^3$ taking $\Gamma$ to itself.  On the other hand, for every even $n$, there is an embedding of $\Gamma$ in $S^3$ which has an orientation reversing homeomorphism.

 We now consider the question of what groups can occur as topological symmetry groups of some embedding of $M_n$ in $S^3$.  This question has been addressed for complete graphs $K_n$ in a series of papers~\cite{CF, FMN, fmn2, fmny, fnt}.  Before we begin we introduce some terminology.  In particular, we will refer to a homeomorphism of $S^3$ taking an embedded graph $\Gamma$ to itself, as a homeomorphism of the pair $(S^3,\Gamma)$.  We also need the following definitions.
 
 \begin{definition}  Let $\gamma$ be an abstract graph.  The group of automorphisms of the vertices of $\gamma$ is denoted by $\mathrm{Aut}(\gamma)$.
 \end{definition}

\begin{definition}  Let $\Gamma$ be a graph embedded in $S^3$.  We define the {\bf orientation preserving topological symmetry group} $\mathrm{TSG}_+(\Gamma)$ as the subgroup of $\mathrm{Aut}(\Gamma)$ induced by orientation preserving homeomorphisms of $(S^3,\Gamma)$. 
\end{definition}

\begin{definition}  Let $G$ be a group and let $\gamma$ denote an abstract graph.  If there is some embedding $\Gamma$ of $\gamma$ in $S^3$ such that $\mathrm{TSG}_+(\Gamma)\cong G$, then we say that the group $G$ is {\bf positively realizable} for $\gamma$.
\end{definition}

The goal of this paper is to characterize for each $n$ which groups are positively realizable for $M_n$.  For $n=1$, the M\"{o}bius ladder $M_1$ is a $\theta$ graph which consists of two vertices and three edges going between them.  Because there are only two vertices, $\mathrm{Aut}(M_1)\cong \mathbb{Z}_2$.  A planar embedding $\Gamma$ of $M_1$, has $\mathrm{TSG}_+(\Gamma)\cong \mathbb{Z}_2$. An embedding $\Gamma$ of $M_1$ with a non-invertible knot in one of the edges of  $M_1$ has the trivial group as $\mathrm{TSG}_+(\Gamma)$.  This completely characterizes $\mathrm{TSG}_+(\Gamma)$ for embeddings $\Gamma$ of $M_1$.  For $n=2$, observe that $M_2=K_4$.  It was shown in \cite{CF} that every subgroup of $\mathrm{Aut}(K_4)\cong S_4$ is positively realizable for $K_4$.  Thus we focus on $M_n$ for $n\geq 3$.

For $n>3$, Simon \cite{Si} showed that every automorphism of the abstract graph $M_n$ takes the $2n$-gon to itself.  Thus for $n>3$, the group $\mathrm{Aut}(M_n)$ must be a subgroup of the dihedral group $D_{2n}$.  This makes the analysis of topological symmetry groups of embeddings of $M_n$ much simpler for $n>3$ than it is for $n=3$.  We consider the topological symmetry groups of embeddings of $M_3$ in Section 2, and consider the topological symmetry groups of embeddings of $M_n$ for all $n>3$ in Section 3.  Note that any embedding of $M_n$ with $n>1$ can be modified by adding distinct knots on each edge to obtain an embedding $\Gamma$ with the trivial group as $\mathrm{TSG}_+(\Gamma)$.  Thus from now on we will only consider non-trivial subgroups of  $\mathrm{Aut}(M_n)$.

\bigskip

\section{Topological symmetry groups of embeddings of $M_3$}

 In order to obtain a list of groups to check for positive realizability for $M_3$, we observe that $M_3$ has six vertices and hence its automorphism group is contained in the symmetric group $S_6$.  Thus we start with the following list of all subgroups of $S_6$ up to isomorphism \cite{Pf}:  $D_6$, $D_3$, $D_2$, $\mathbb Z_6$, $\mathbb Z_3$, $\mathbb Z_2$, $D_3\times D_3$, $\mathbb Z_3\times\mathbb Z_3$,  $(\mathbb Z_3\times\mathbb Z_3)\rtimes\mathbb Z_2$, $D_3\times\mathbb Z_3$, $\mathbb{Z}_2\times  \mathbb{Z}_2\times  \mathbb{Z}_2$, $S_3\wr\mathbb Z_2$, $\mathbb Z_4$, $\mathbb Z_5$, $D_4$, $\mathbb Z_2\times\mathbb Z_4$, $D_4$, $D_5$, $A_4$, $D_4\times\mathbb Z_2$, $\mathbb Z_5\rtimes\mathbb Z_4$, $A_4\times\mathbb Z_2$, $S_4$, $(\mathbb Z_3\times\mathbb Z_3)\rtimes Z_4$, $S_4\times \mathbb Z_2$, $A_5$, $S_5$, $A_6$, $S_6$.

We can see from Figure~\ref{m3} that the graph $M_3$ is equivalent to the complete bipartite graph $K_{3,3}$.  Also not that the automorphism group of $K_{3,3}$ is the same as that of its complementary graph, two triangles.  Hence $\mathrm{Aut}(K_{3,3})$ is isomorphic to the wreath product $S_3\wr\mathbb Z_2$. However, a result of Nikkuni and Taniyama \cite{NT} shows that, up to conjugation, the only non-trivial automorphisms of $K_{3,3}$ that can be induced by orientation preserving homeomorphisms of some embedding of $K_{3,3}$ in $S^3$ are: $(123)$, $(12)(45)$, $(123)(456)$, $(14)(25)(36)$, and $(142536)$.  In particular, every such automorphism has order 2, 3, or 6, and no such automorphism is a transposition.  It follows that for any embedding $\Gamma$ of $K_{3,3}$ in $S^3$, the group $\mathrm{TSG}_+(\Gamma)$ is a proper subgroup of $S_3\wr\mathbb Z_2$.  Since the order of $S_3\wr\mathbb Z_2$ is 72, this means the order of any positively realizable group for $K_{3,3}$ must be a proper divisor of 72.

\begin{figure}[h]
\begin{center}
\includegraphics[scale=.6]{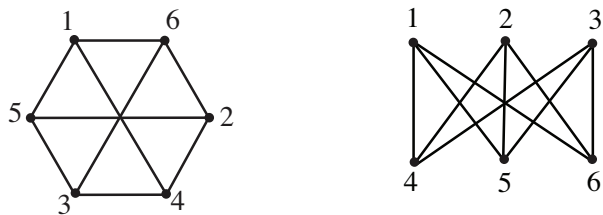}
\caption{$K_{3,3}$ and $M_3$ are the same graph.}
\label{m3}
\end{center}
\end{figure}

Furthermore, since no automorphisms of order 4 or 5 of $K_{3,3}$ can be induced by an orientation preserving homeomorphism of some embedding of $K_{3,3}$ in $S^3$, we only need to consider subgroups of $S_6$ containing no elements of order 4 or 5.  Here is a complete list of the subgroups we need to consider up to isomorphism: $D_6$, $D_3$, $D_2$, $\mathbb Z_6$, $\mathbb Z_3$, $\mathbb Z_2$, $D_3\times D_3$, $\mathbb Z_3\times\mathbb Z_3$,  $(\mathbb Z_3\times\mathbb Z_3)\rtimes\mathbb Z_2$, $D_3\times\mathbb Z_3$, $\mathbb{Z}_2\times  \mathbb{Z}_2\times  \mathbb{Z}_2$.

  Since no transposition of $K_{3,3}$ can be induced by an orientation preserving homeomorphism of $(S^3, \Gamma)$ for any embedding $\Gamma$ of $K_{3,3}$ in $S^3$, the following lemma implies that the group $\mathbb{Z}_2\times  \mathbb{Z}_2\times  \mathbb{Z}_2$ is not positively realizable for $K_{3,3}$.

\begin{lemma}\label{Z2Z2Z2}  Let $K_{3,3}$ be the complete bipartite graph with vertex sets $\{1,2,3\}$ and $\{4,5,6\}$.  Let $G\leq \mathrm{Aut}(K_{3,3})$ such that $G\cong \mathbb{Z}_2\times  \mathbb{Z}_2\times  \mathbb{Z}_2$.  Then $G$ contains a transposition.
\end{lemma}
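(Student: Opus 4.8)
The plan is to exploit the wreath-product structure of $\mathrm{Aut}(K_{3,3})\cong S_3\wr\mathbb{Z}_2$ together with a Sylow/order count. Let $N\le\mathrm{Aut}(K_{3,3})$ be the index-two subgroup consisting of the automorphisms that preserve the bipartition $\{1,2,3\}\sqcup\{4,5,6\}$; thus $N\cong S_3\times S_3$, and the nontrivial coset consists of the automorphisms that interchange the two vertex sets (the coset of $\delta=(14)(25)(36)$). First I would record which elements of $\mathrm{Aut}(K_{3,3})$ are transpositions. Since a transposition must send adjacent vertices to adjacent vertices and non-adjacent to non-adjacent, and since two vertices of $K_{3,3}$ are adjacent exactly when they lie in different parts, the only transpositions are the six swaps within a single part, namely $(12),(13),(23),(45),(46),(56)$. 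In particular, every transposition lies in $N$.

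Next I would set $H=G\cap N$. Because $N$ has index two in $\mathrm{Aut}(K_{3,3})$, the subgroup $H$ has index one or two in $G$. The crux of the argument is that $G$ cannot be contained in $N$: a Sylow $2$-subgroup of $S_3\times S_3$ has order $4$ (each factor contributes a $\mathbb{Z}_2$), so $N$ has no subgroup of order $8$, whereas $|G|=8$. Hence $[G:H]=2$ and $|H|=4$. As $H$ is a subgroup of order $4$ of $G\cong\mathbb{Z}_2\times\mathbb{Z}_2\times\mathbb{Z}_2$, it is elementary abelian, so $H\cong\mathbb{Z}_2\times\mathbb{Z}_2$ sits inside $N\cong S_3\times S_3$.

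It then remains to show that every copy of $\mathbb{Z}_2\times\mathbb{Z}_2$ in $S_3\times S_3$ contains a transposition of one of the two factors. I would argue via the coordinate projections $p_1,p_2\colon H\to S_3$. Each image $p_i(H)$ is a $2$-subgroup of $S_3$, hence of order $1$ or $2$, and $H\le p_1(H)\times p_2(H)$. Since $|H|=4$, this forces $|p_1(H)|=|p_2(H)|=2$ and $H=p_1(H)\times p_2(H)=\langle\tau_1\rangle\times\langle\tau_2\rangle$, where $\tau_1$ is a transposition of $\{1,2,3\}$ and $\tau_2$ a transposition of $\{4,5,6\}$. In particular, $H$ contains the automorphism acting as $\tau_1$ on the first part and trivially on the second, which is one of the six transpositions above. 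Therefore $G\supseteq H$ contains a transposition, as desired.

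The argument is short, and the only step demanding care is the last one: one must rule out a ``diagonal'' copy of $\mathbb{Z}_2\times\mathbb{Z}_2$ built entirely from mixed involutions $(\sigma,\tau)$ with both coordinates nontrivial. The projection computation dispatches this cleanly; the underlying fact that makes it work is that the product of two distinct transpositions in $S_3$ is a $3$-cycle, so the mixed involutions cannot close up into a group of order four without one coordinate degenerating to the identity. I expect this to be the main (and only mildly delicate) obstacle, with everything else reducing to a direct count of orders.
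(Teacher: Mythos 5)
Your proof is correct, but it takes a genuinely different route from the paper's. The paper argues by contradiction with a direct count: assuming $G$ has no transposition, every involution in $G$ either swaps the two parts or is a product of one transposition from each part; since $G$ has seven nontrivial elements and the part-preserving ones form a subgroup of index at most two, $G$ must contain two distinct ``double swaps,'' and an explicit computation (e.g.\ $(12)(45)\cdot(12)(46)=(456)$) produces an element of order $3$, contradicting $G\cong \mathbb{Z}_2\times\mathbb{Z}_2\times\mathbb{Z}_2$. You instead argue structurally and constructively: Lagrange (or Sylow) forces $|G\cap N|=4$ where $N\cong S_3\times S_3$ is the part-preserving subgroup, and the projection argument forces $G\cap N=\langle\tau_1\rangle\times\langle\tau_2\rangle$, exhibiting a transposition directly. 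Your approach avoids any case analysis on specific permutations and makes clear \emph{why} the lemma holds (a Klein four-group cannot sit diagonally in $S_3\times S_3$ because the $2$-part of $S_3$ is too small), and it generalizes readily to $K_{m,m}$; the paper's approach is more elementary in that it needs nothing beyond multiplying permutations, at the cost of a small amount of bookkeeping. Both are complete; the only step in yours that warranted care --- ruling out a diagonal embedding --- is exactly the projection computation you supply.
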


\begin{proof}  Suppose that $G$ does not contain a transposition.  Since $G\cong \mathbb{Z}_2\times  \mathbb{Z}_2\times  \mathbb{Z}_2$, every nontrivial element of $G$ has order 2.  Hence every element of $G$ either interchanges the sets $\{1,2,3\}$ and $\{4,5,6\}$ or interchanges pairs of vertices in each of the sets $\{1,2,3\}$ and $\{4,5,6\}$.  The product of two distinct elements of $G$ that interchange the sets $\{1,2,3\}$ and $\{4,5,6\}$ is an element of $G$ which interchanges one pair of vertices in each of the sets $\{1,2,3\}$ and $\{4,5,6\}$.  Since $\mathbb{Z}_2\times  \mathbb{Z}_2\times  \mathbb{Z}_2$ contains eight involutions, it must contain at least  two elements that interchange a pair of vertices in each of the sets $\{1,2,3\}$ and $\{4,5,6\}$. Thus without loss of generality $G$ contains $(12)(45)$, and either $(12)(46)$ or $(13)(46)$.  But this is impossible because then $G$ would contain either $(12)(45)(12)(46)=(456)$ or $(12)(45)(13)(46)=(123)(456)$, both of which have order 3.  Thus $G$ must contain a transposition. \end{proof}
\medskip

We will now show that all of the groups $D_6$, $D_3$, $D_2$, $\mathbb Z_6$, $\mathbb Z_3$, $\mathbb Z_2$, $D_3\times D_3$, $\mathbb Z_3\times\mathbb Z_3$,  $(\mathbb Z_3\times\mathbb Z_3)\rtimes\mathbb Z_2$, $D_3\times\mathbb Z_3$ are positively realizable for $K_{3,3}$.  Observe that the group $D_6$ contains the groups $D_3$, $D_2$, $\mathbb Z_6$, $\mathbb Z_3$, $\mathbb Z_2$,  as subgroups, and the group $D_3\times D_3$ contains the groups $\mathbb Z_3\times\mathbb Z_3$,  $(\mathbb Z_3\times\mathbb Z_3)\rtimes\mathbb Z_2$, and $D_3\times\mathbb Z_3$ as subgroups.  We deal with these two sets of groups in different subsections.  As a matter of convenience, we will refer to the following automorphisms of $K_{3,3}$.

\begin{eqnarray*}
f&=&(123)(456)\\
g&=&(123)(465)\\
\psi&=&(14)(25)(36)\\
\varphi&=&(12)(45)
\end{eqnarray*}

 It is easily checked that the above automorphisms have the following relations:  $fg=gf$, $f\psi=\psi f$, $\psi g\psi=g^{-1}$, $\varphi f\varphi=f^{-1}$, $\varphi g\varphi=g^{-1}$, and $\phi(f\psi)\varphi=(f\psi)^{-1}$  
\medskip

\subsection{$D_6$ and its subgroups.}

In this subsection we create embeddings of $K_{3,3}$ in $S^3$ to show that the dihedral group $D_6$ and its subgroups  $D_3$, $D_2$, $\mathbb Z_6$, $\mathbb Z_3$, and $\mathbb Z_2$ are positively realizable for $K_{3,3}$.  In each case, we will start with a planar embedding of a hexagon with consecutive labeled vertices 1, 6, 2, 4, 3, 5 and stacked edges between antipodal vertices.  Then we add knots in various edges to obtain the groups we want.  In particular, we use knots to ensure that any homeomorphism of the graph in $S^3$ will take the hexagon to itself.  This will guarantee that the topological symmetry group of each embedding is a subgroup of $D_6$.

Note that it was shown in \cite{FMN} that for 3-connected graphs adding local knots to edges is a well defined operation and that any homeomorphism of $S^3$ taking the graph to itself must take an edge with a given knot to an edge with the same knot.  Furthermore, any orientation preserving homeomorphism of $S^3$ taking the graph to itself must take an edge with a given non-invertible knot to an edge with the same knot oriented in the same way.  

\begin{theorem}\label{D6}
Every subgroup of $D_6$ is positively realizable for $K_{3,3}$.
\end{theorem}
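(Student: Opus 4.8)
The plan is to realize each of the six isomorphism types $D_6$, $\mathbb{Z}_6$, $D_3$, $\mathbb{Z}_3$, $D_2$, $\mathbb{Z}_2$ by a separate embedding, all built from one base embedding: the planar hexagon with consecutive vertices $1,6,2,4,3,5$ together with the three stacked spokes joining antipodal vertices, decorated with local knots. In every case the upper bound $\mathrm{TSG}_+(\Gamma)\le D_6$ will be forced the same way: I would put knots on the three spokes of a type not carried by any hexagon edge, so that by the result of \cite{FMN} every homeomorphism of $(S^3,\Gamma)$ carries spokes to spokes and hexagon edges to hexagon edges, hence preserves the hexagon. Since the setwise stabilizer of a $6$-cycle in $\mathrm{Aut}(K_{3,3})$ is exactly the dihedral group $D_6$ of that cycle (and antipodality, hence the spokes, is preserved automatically), this pins the induced automorphism down to an element of $D_6$.

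Next I would record the rigid motions of $S^3$ that induce the generators on the base embedding. Writing $r=f\psi$, which has order $6$, the element $r$ is induced by a rotation of $S^3$ cyclically permuting the hexagon; its powers give $\langle r\rangle\cong\mathbb{Z}_6$, with $r^2$ generating the subgroup $\langle f\rangle\cong\mathbb{Z}_3$ and $r^3=\psi$ generating a $\mathbb{Z}_2$. Each reflection of the hexagon, and in particular $\varphi$, is induced by a $\pi$-rotation of $S^3$ about an axis lying in the plane of the hexagon; such a motion is orientation preserving on $S^3$, so it is a legitimate candidate for $\mathrm{TSG}_+$. Using the relations already verified in the excerpt, namely $\varphi(f\psi)\varphi=(f\psi)^{-1}$, $\varphi f\varphi=f^{-1}$, and the commutativity of $\psi$ and $\varphi$ (which follows from those relations), one gets $\langle r,\varphi\rangle\cong D_6$, $\langle f,\varphi\rangle\cong D_3$, and $\langle\psi,\varphi\rangle\cong\mathbb{Z}_2\times\mathbb{Z}_2=D_2$, so every group I must realize is generated by these explicit motions.

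For each target group $G$ I would then choose the knot decoration so that the subgroup of $D_6$ preserving it -- respecting, crucially, the \emph{orientations} of non-invertible knots, per \cite{FMN} -- is exactly $G$. For the full $D_6$, I would place one fixed invertible knot on each of the three spokes and leave the hexagon edges unknotted: all of $D_6$ preserves this decoration, while the knotted spokes force the hexagon, giving $\mathrm{TSG}_+=D_6$. To cut $D_6$ down to $\mathbb{Z}_6$, I would instead place a single non-invertible knot on each hexagon edge, coherently oriented around the cycle, together with a distinct invertible knot on the spokes; every rotation preserves the coherent orientation, whereas each reflection reverses it and so, by \cite{FMN}, cannot be induced by an orientation preserving homeomorphism, leaving $\mathrm{TSG}_+=\mathbb{Z}_6$. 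The groups $D_3$, $\mathbb{Z}_3$, $D_2$, $\mathbb{Z}_2$ are obtained by the same principle, matching the invertible/non-invertible and ``equal versus distinct'' choices of knot to exactly which elements of $D_6$ fix, permute, or reverse which edges. This last matching requires genuine bookkeeping, since for instance both $r^2$ and $r^3$ reverse some of the spokes while $\varphi$ reverses none of the hexagon's cyclic order, so the invertibility of the spoke knots versus the hexagon knots is precisely what selects a cyclic subgroup over a dihedral one.

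The main obstacle is the geometric step underlying the $D_6$ case: producing a single base embedding invariant under both the order-six rotation $r$ and the in-plane $\pi$-rotation inducing $\varphi$. The latter interchanges the two sides of the plane of the hexagon, so it must respect the over/under stacking of the three spokes; the stacking therefore cannot be arranged by height alone, and because $K_{3,3}$ is non-planar the three spokes are genuinely forced to cross in projection. Exhibiting three disjoint spokes whose crossing pattern carries the full $D_6$ symmetry (most transparently via an explicit figure) is the crux. Once such a configuration is in hand, each subgroup follows from the decoration recipe above, with the lower bound $G\le\mathrm{TSG}_+(\Gamma)$ coming from invariance of the decoration under the chosen orientation preserving motions and the upper bound coming from \cite{FMN}.
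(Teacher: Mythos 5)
Your proposal is correct and follows essentially the same route as the paper: the same planar hexagon with stacked rungs, the same generators $f\psi$ and $\varphi$ (realized by a glide rotation and a $\pi$-rotation about an axis through a rung), and the same use of invertible versus non-invertible (and identical versus distinct) local knots to force the hexagon invariant and cut $D_6$ down to each subgroup. The one point you flag as the crux --- making the stacked spokes compatible with the order-six symmetry --- is handled in the paper by composing the $2\pi/6$ rotation of the hexagon with a $2\pi/3$ meridional rotation around it (a glide-type homeomorphism rather than a rigid rotation of $S^3$), which cyclically permutes the stacked rungs and then isotopes the knots back into position.
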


\begin{proof}  We have already shown that the trivial group is realizable, so we only need to consider $D_6$ and its subgroups  $D_3$, $D_2$, $\mathbb Z_6$, $\mathbb Z_3$, and $\mathbb Z_2$.  

We begin with the embedding $\Gamma$ of $K_{3,3}$ illustrated in Figure~\ref{D6}, in which all of the black squares represent the same invertible knot.  Because of these knots any homeomorphism of $(S^3, \Gamma)$ takes the hexagon to itself, and hence $\mathrm{TSG}_+(\Gamma)$ must be isomorphic to subgroup of $D_6$. We define a homeomorphism of $(S^3, \Gamma)$ by composing a counterclockwise rotation of the hexagon by $2\pi/6$ and a $2\pi/3$ meridional rotation around the hexagon, then isotoping the knots back into position.   This homeomorphism induces the automorphism $f\psi=(153426)$.   Also a rotation of $S^3$ pointwise fixing an axis containing the rung $\overline{36}$ takes $\Gamma$ to itself inducing the automorphism $\varphi=(12)(45)$.  Since $\langle f\psi,\varphi\rangle\cong D_6$, we know that $\mathrm{TSG}_+(\Gamma)\cong D_6$.  

\begin{figure}[h]
\begin{center}
\includegraphics[scale=.6]{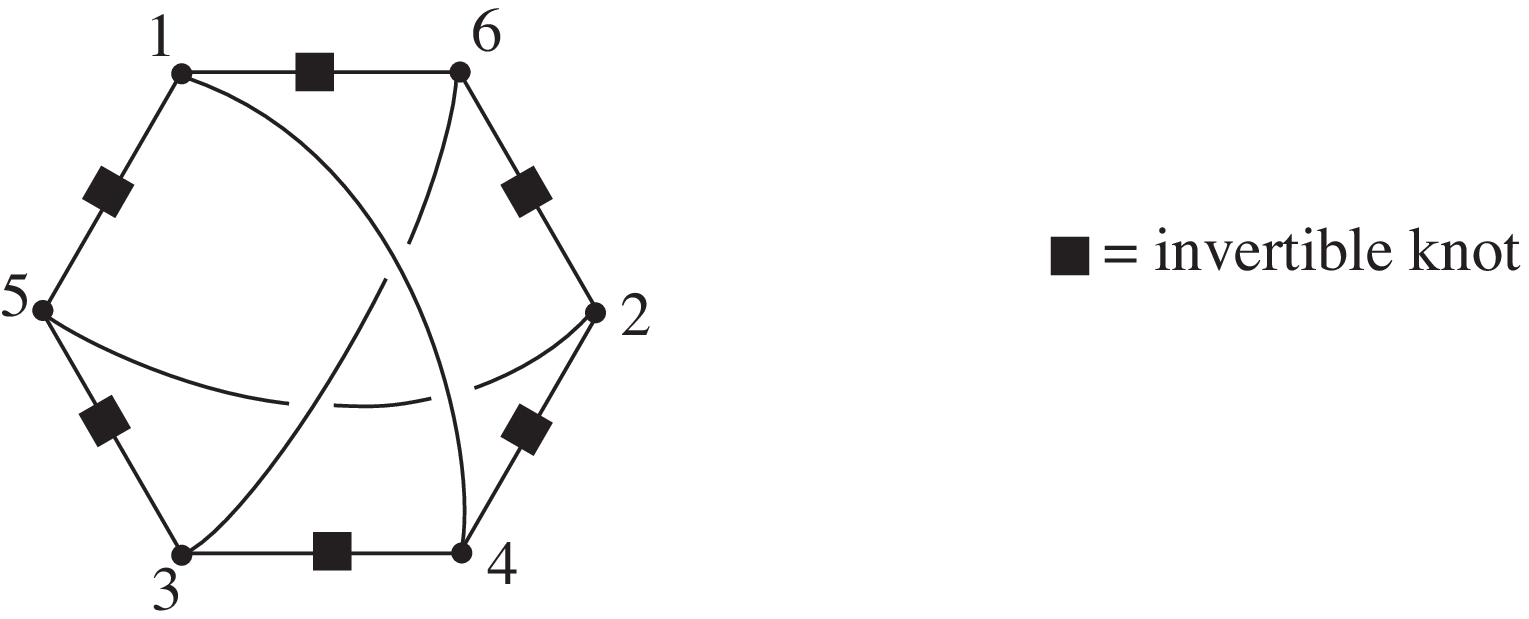}
\caption{$\mathrm{TSG_+}(\Gamma)\cong D_6$.}
\label{D6}
\end{center}
\end{figure}

Starting with the embedding $\Gamma$ in Figure~\ref{D6}, we obtain a new embedding $\Gamma_1$ of $K_{3,3}$ by replacing each invertible knot on $\Gamma$ with equivalent non-invertible knots. The automorphism $f\psi=(153426)$ is induced on $\Gamma_1$ by an analogous homeomorphism of $(S^3, \Gamma_1)$.  However, because of the non-invertible knots there is no orientation preserving homeomorphism of $(S^3,\Gamma_1)$ which turns the hexagon over.  Hence $\mathrm{TSG}_+(\Gamma_1)=\langle f\psi\rangle\cong\mathbb Z_6$.

\begin{figure}[h]
\begin{center}
\includegraphics[scale=.6]{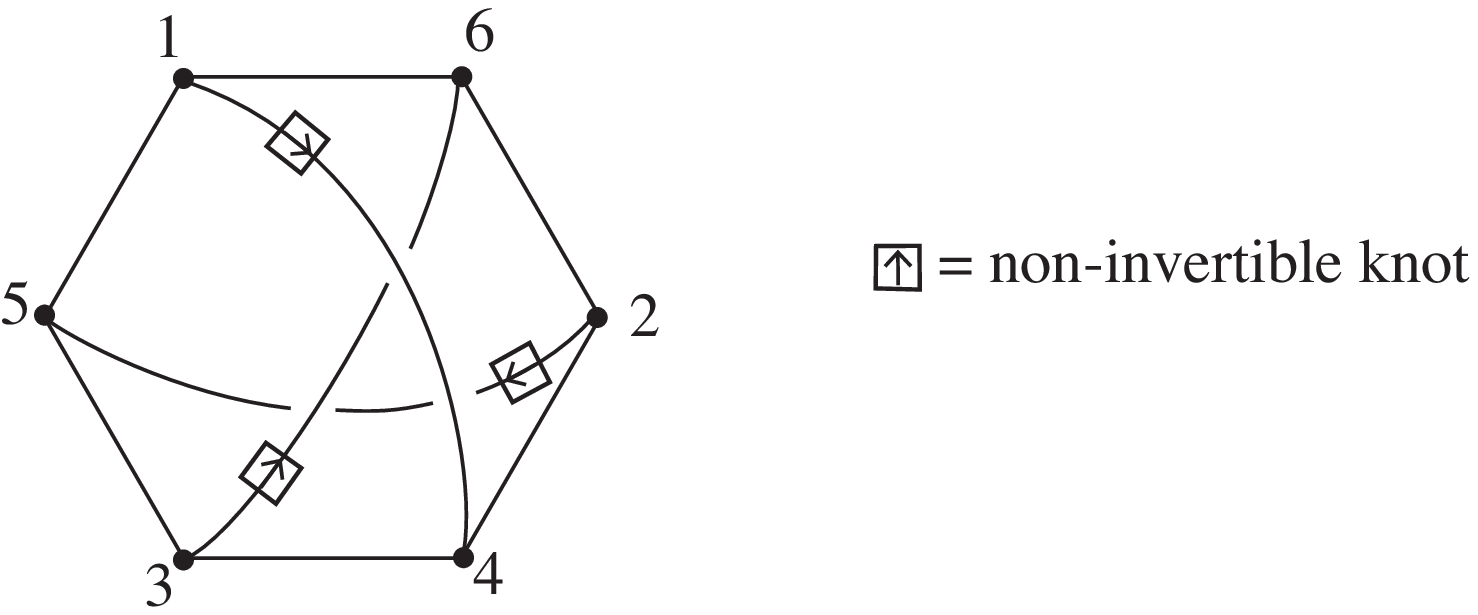}
\caption{$\mathrm{TSG_+}(\Gamma_2)\cong D_3$}
\label{D3}
\end{center}
\end{figure}

Next let $\Gamma_2$ be the embedding of $K_{3,3}$ shown in Figure~\ref{D3}.  The boxed arrows represent a particular non-invertible knot which is oriented as indicated.  Because of these non-invertible knots the hexagon must be invariant under any homeomorphism of  $(S^3,\Gamma_2)$, and no even order automorphism of the hexagon can be induced by an orientation preserving homeomorphism.  Thus $\mathrm{TSG}_+(\Gamma_2)$ must be isomorphic to subgroup of $D_3$.  Now the automorphism $\varphi$ is induced by a rotation of $(S^3,\Gamma_2)$ pointwise fixing an axis containing the edge $\overline{36}$. We define a homeomorphism $h_2$ of $(S^3, \Gamma_2)$ by composing a clockwise $2\pi/3$ rotation and a $2\pi/3$ rotation meridionally around the hexagon.  Then $h_2$ induces the automorphism $f=(123)(456)$ of $\Gamma_2$.  Therefore, $\mathrm{TSG}_+(\Gamma_2)=\langle f,\varphi\rangle\cong D_3$. 

Starting with the embedding $\Gamma_2$ illustrated in Figure~\ref{D3}, we nowF add noninvertible knots to every other edge of the hexagon to obtain a new embedding $\Gamma_3$ of $K_{3,3}$ as shown in Figure~\ref{Z3}.  Because the knots on the hexagon are noninvertible,  no orientation preserving homeomorphism of $(S^3, \Gamma_3)$ turns the hexagon over.  But we can still induce the automorphism $f=(123)(456)$.  Thus, in this case, $\mathrm{TSG}_+(\Gamma_3)=\langle f\rangle\cong \mathbb Z_3$.

\begin{figure}[h]
\begin{center}
\includegraphics[scale=.6]{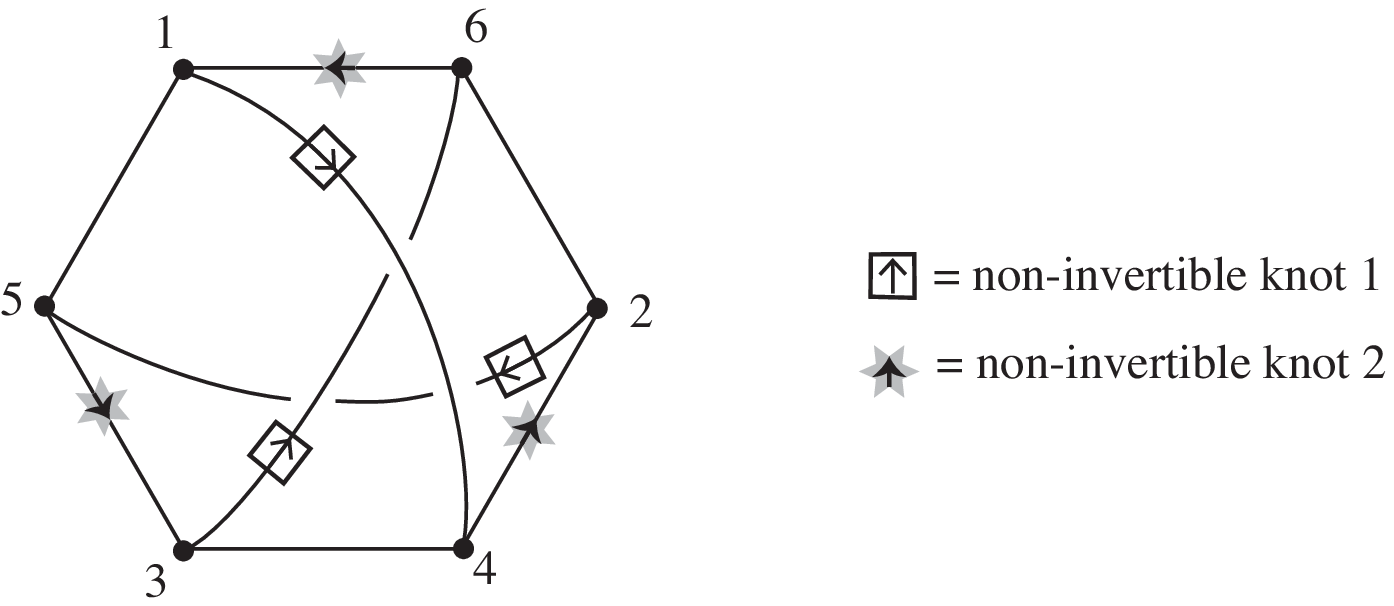}
\caption{$\mathrm{TSG_+}(\Gamma_3)\cong \mathbb Z_3$}
\label{Z3}
\end{center}
\end{figure}

Now, consider the embedding $\Gamma_4$ of $K_{3,3}$ in Figure~\ref{D2} which has equivalent invertible knots in edges $\overline{15}$ and $\overline{24}$ and a different invertible knot in the remaining edges of the hexagon.

\begin{figure}[h]
\begin{center}
\includegraphics[scale=.6]{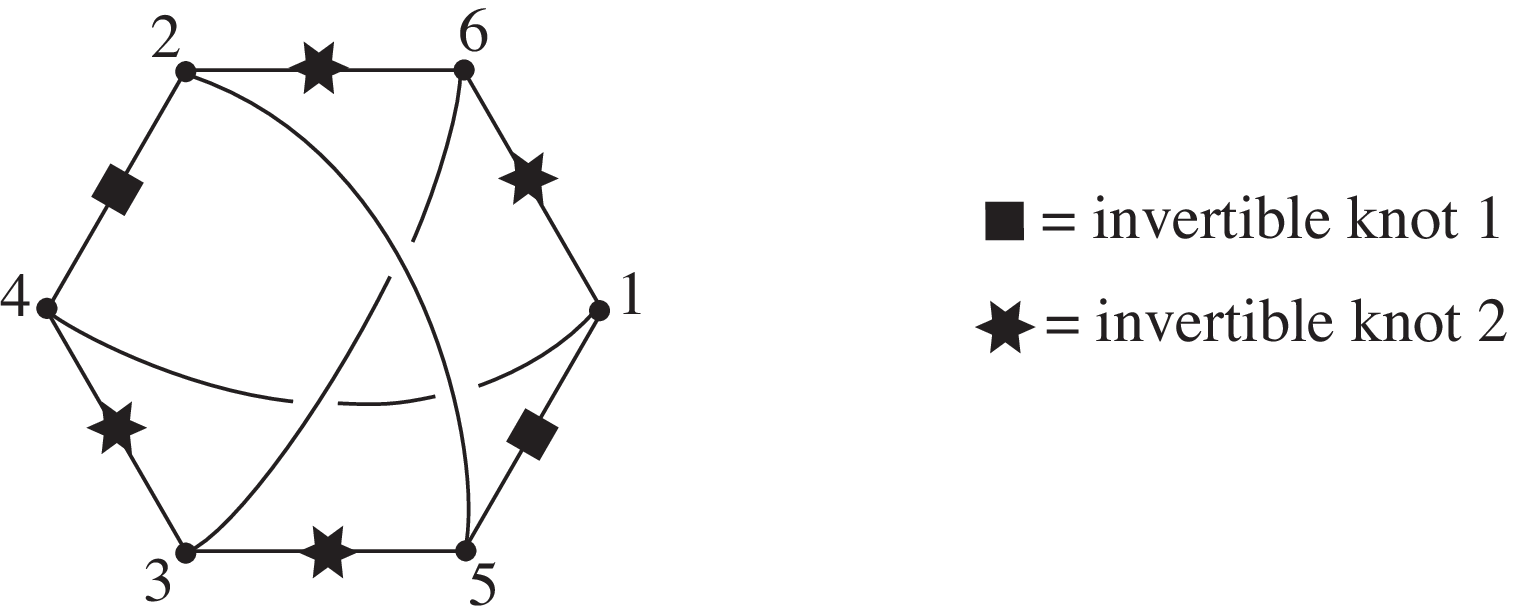}
\caption{$\mathrm{TSG_+}(\Gamma_4)\cong D_2$}
\label{D2}
\end{center}
\end{figure}

Any homeomorphism of $(S^3, \Gamma_4)$ must take the hexagon to itself either setwise fixing both $\overline{15}$ and $\overline{24}$ or interchanging them.  Rotating the hexagon by $180^{\circ}$ induces the automorphism $\psi=(14)(25)(36)$, and turning over the graph about an axis containing the edge $\overline{36}$ induces $\varphi$.  The product $\psi\varphi=(15)(24)(36)$ is the only nontrivial automorphism which setwise fixes both $\overline{15}$ and $\overline{24}$. Thus, $\mathrm{TSG}_+(\Gamma_4)=\langle \psi,\varphi\rangle\cong D_2$.

Finally, consider the embedding $\Gamma_5$ of $K_{3,3}$ shown in Figure~\ref{Z2} with four distinct invertible knots on the edges of the hexagon as indicated. The homeomorphism of $S^3$ which turns the graph over about an axis through edges $\overline{14}$ and $\overline{25}$ induces the automorphism $\psi=(14)(25)(36)$ on $\Gamma_5$. Since $\overline{14}$ and $\overline{25}$ must each be setwise fixed by any homeomorphism of $(S^3,\Gamma_5)$, the automorphism $\psi$ is the only nontrivial element in $\mathrm{TSG_+}(\Gamma_5)$.  Thus, $\mathrm{TSG_+}(\Gamma_5)=\langle\psi\rangle\cong \mathbb Z_2$.\end{proof}

\begin{figure}[h]
\begin{center}
\includegraphics[scale=.6]{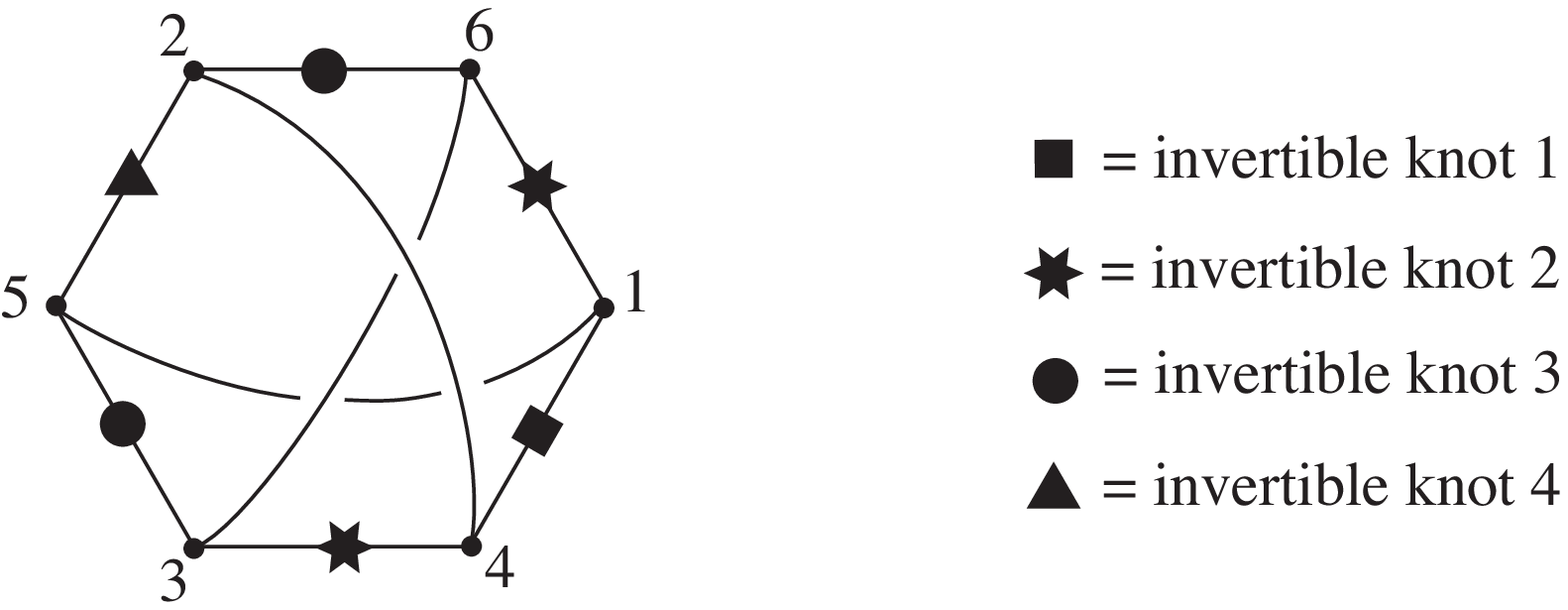}
\caption{$\mathrm{TSG_+}(\Gamma_5)\cong Z_2$}
\label{Z2}
\end{center}
\end{figure}
\medskip

\subsection{$D_3\times D_3$ and its subgroups.}
In this subsection we create embeddings of $K_{3,3}$ in $S^3$ to show that the group $D_3\times D_3$ and its subgroups $(\mathbb Z_3\times\mathbb Z_3)\rtimes\mathbb Z_2$, $D_3\times\mathbb Z_3$, and $\mathbb{Z}_3\times \mathbb{Z}_3$ are positively realizable for $K_{3,3}$.  In particular, we prove the following theorem.

\begin{theorem}\label{D3D3}
Every subgroup of $D_3\times D_3$ is positively realizable for $K_{3,3}$
\end{theorem}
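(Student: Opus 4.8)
The plan is to realize the four nontrivial groups on the list—$\mathbb{Z}_3\times\mathbb{Z}_3=\langle f,g\rangle$, $D_3\times\mathbb{Z}_3=\langle f,g,\psi\rangle$, $(\mathbb{Z}_3\times\mathbb{Z}_3)\rtimes\mathbb{Z}_2=\langle f,g,\varphi\rangle$, and $D_3\times D_3=\langle f,g,\psi,\varphi\rangle$—starting from one maximally symmetric embedding and then breaking symmetries with knots, in the spirit of Theorem~\ref{D6}. First I would record the group theory. Using the relations listed above, together with the easily checked identity $\psi\varphi=\varphi\psi=(15)(24)(36)$, one verifies that $\langle f,g,\psi,\varphi\rangle=\langle f,\psi\varphi\rangle\times\langle g,\psi\rangle\cong D_3\times D_3$ has order $36$, that $\langle f,g,\psi\rangle=\langle f\rangle\times\langle g,\psi\rangle\cong\mathbb{Z}_3\times D_3$ and $\langle f,g,\varphi\rangle\cong(\mathbb{Z}_3\times\mathbb{Z}_3)\rtimes\mathbb{Z}_2$ have order $18$, and that $\langle f,g\rangle\cong\mathbb{Z}_3\times\mathbb{Z}_3$. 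The useful point is that $D_3\times D_3$ is an index-$2$, hence maximal, subgroup of $\mathrm{Aut}(K_{3,3})\cong S_3\wr\mathbb{Z}_2$ containing no transposition: it is the kernel of the parity map sending a class-preserving $(\sigma_1,\sigma_2)$ to $\mathrm{sgn}(\sigma_1)\,\mathrm{sgn}(\sigma_2)$, extended trivially over the class-interchanging involution $\psi$.

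For the base embedding I would view $S^3\subset\mathbb{C}^2$ as $\{|z|^2+|w|^2=1\}$, place $\{1,2,3\}$ at the cube roots of unity on the circle $\{w=0\}$ and $\{4,5,6\}$ on the Hopf-linked circle $\{z=0\}$, and join them by nine edges in a configuration invariant under the two commuting order-$3$ rotations $(z,w)\mapsto(e^{2\pi i/3}z,w)$ and $(z,w)\mapsto(z,e^{2\pi i/3}w)$. These rotations induce $(123)$ and $(456)$, realizing $\langle f,g\rangle\cong\mathbb{Z}_3\times\mathbb{Z}_3$. The coordinate swap $(z,w)\mapsto(w,z)$ and complex conjugation $(z,w)\mapsto(\bar z,\bar w)$ are both orientation preserving on $S^3$ and induce $\psi$ and a conjugate of $\varphi$, so that together they generate $\langle f,g,\psi,\varphi\rangle$. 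Thus an unknotted symmetric version of this embedding realizes all four automorphisms. Since $\langle f,g\rangle$ already acts transitively on the nine edges, every edge lies in a single orbit; I will use this repeatedly when decorating edges, as it forces all nine to carry the same knot.

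The upper bounds I would argue group-theoretically, leaning on the Nikkuni--Taniyama restriction \cite{NT}. For the top group the symmetric embedding has $\mathrm{TSG}_+\supseteq D_3\times D_3$, and since $D_3\times D_3$ is maximal, the only alternative is $\mathrm{TSG}_+=\mathrm{Aut}(K_{3,3})$, which is impossible because the full group contains a transposition; hence $\mathrm{TSG}_+\cong D_3\times D_3$ with no knots needed. For the proper subgroups I would decorate all nine edges with one common knot: a \emph{nontrivial invertible} knot kills $\varphi$ but leaves $\psi$ (which merely reverses the three edges $\ol{14},\ol{25},\ol{36}$ it fixes) and the rotations intact, giving $\langle f,g,\psi\rangle\cong D_3\times\mathbb{Z}_3$; a \emph{nontrivial non-invertible} knot coherently oriented from $\{1,2,3\}$ toward $\{4,5,6\}$ additionally forbids the edge-reversing $\psi$, leaving $\langle f,g\rangle\cong\mathbb{Z}_3\times\mathbb{Z}_3$. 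The reason $\varphi$ dies in both cases is that $\varphi$ fixes both endpoints of $\ol{36}$, so on a knotted edge it would have to fix that edge pointwise, placing a knotted arc on the unknotted fixed circle of an orientation-preserving involution—an impossibility. That $\mathrm{TSG}_+$ equals the advertised subgroup then follows since every subgroup of $\mathrm{Aut}(K_{3,3})$ properly containing $\langle f,g,\psi\rangle$ contains $\varphi$, and every subgroup properly containing $\langle f,g\rangle$ contains an involution that is either a transposition (excluded by \cite{NT}) or one of these knot-forbidden maps.

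The remaining and most delicate case is $(\mathbb{Z}_3\times\mathbb{Z}_3)\rtimes\mathbb{Z}_2=\langle f,g,\varphi\rangle$: here I must keep the reflection $\varphi$, which by the argument above forces the edges to be \emph{unknotted}, yet still destroy the class swap $\psi$. I would build this asymmetry into the global embedding rather than into local knots, introducing a chiral linking pattern among the edges that distinguishes $\{1,2,3\}$ from $\{4,5,6\}$ and is preserved by the class-preserving symmetries $f,g,\varphi$ but cannot be preserved by any homeomorphism realizing $\psi$. Producing such an equivariant, unknotted, yet swap-obstructing embedding—and checking rigorously that it admits $\varphi$ but not $\psi$—is the main obstacle; once it is in hand, the conclusion $\mathrm{TSG}_+\cong(\mathbb{Z}_3\times\mathbb{Z}_3)\rtimes\mathbb{Z}_2$ follows from the same lattice-plus-\cite{NT} bookkeeping used for the other groups, since the only overgroups of $\langle f,g,\varphi\rangle$ are $D_3\times D_3$ (which needs $\psi$) and subgroups containing a transposition.
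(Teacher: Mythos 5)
Your base embedding and the argument for the top group $D_3\times D_3$ are essentially the paper's (the paper uses a ``fan'' embedding with the two vertex classes on the cores of the two solid tori of a genus-one Heegaard splitting, and bounds the group above by noting $|\mathrm{TSG}_+|$ must be a proper divisor of $72$). But the constructions for the three proper subgroups contain a genuine error. Your mechanism for killing $\varphi$ --- ``$\varphi$ fixes both endpoints of $\overline{36}$, so on a knotted edge it would have to fix that edge pointwise'' --- is a non sequitur. A homeomorphism of $(S^3,\Gamma)$ inducing $\varphi$ need not be an involution, need not be finite order, and even if it carries the edge $\overline{36}$ to itself fixing its endpoints it certainly need not fix it pointwise. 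Indeed, if you put the \emph{same invertible} knot on all nine edges of the symmetric embedding, the turning-over symmetry still induces $\varphi$ (invert the knot on $\overline{36}$ and isotope everything back into place), so your candidate for $D_3\times\mathbb{Z}_3$ still has topological symmetry group $D_3\times D_3$. The same failure infects your $\mathbb{Z}_3\times\mathbb{Z}_3$ candidate: coherently orienting one non-invertible knot on every edge from one vertex class to the other kills $\psi$ but \emph{not} $\varphi$, since $\varphi$ preserves the classes and hence the knot orientations; that embedding realizes $(\mathbb{Z}_3\times\mathbb{Z}_3)\rtimes\mathbb{Z}_2$, not $\mathbb{Z}_3\times\mathbb{Z}_3$. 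Ironically, this is exactly the case you flag as ``the most delicate'' and leave unconstructed --- it is in fact the easy one, and your insistence that the edges must be unknotted there is based on the same faulty fixed-point reasoning.

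What is actually needed, and what the paper supplies, is a way to kill $\varphi$ (and every automorphism interchanging pairs of vertices within each class) while preserving $f$, $g$, and $\psi$. The paper does this not with knots \emph{in} edges but with a local pattern at each vertex in which each of the three incident edges is ``knotted around'' the next in a fixed cyclic order; this cyclic structure is preserved by the rotations and by a suitable isotopy inducing $\psi$, but any homeomorphism inducing $\varphi$ would have to send an edge that is knotted around another to a pair without that relation. Superimposing the coherently oriented non-invertible knots on top of that local structure then yields $\mathbb{Z}_3\times\mathbb{Z}_3$. Your group-theoretic bookkeeping (the maximality of $D_3\times D_3$, the exclusion of transpositions via \cite{NT}, and the product decomposition $\langle f,\psi\varphi\rangle\times\langle g,\psi\rangle$) is fine; the gap is entirely in the topology of how $\varphi$ is obstructed.
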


\begin{proof}  Since we proved that all of the subgroups of $D_6$ are positively realizable in Theorem~\ref{D6}, here we only need to prove that the group $D_3\times D_3$ and its subgroups $(\mathbb Z_3\times\mathbb Z_3)\rtimes\mathbb Z_2$, $D_3\times\mathbb Z_3$, and $\mathbb{Z}_3\times \mathbb{Z}_3$ are positively realizable for $K_{3,3}$

\begin{figure}[h]
\begin{center}
\includegraphics[scale=.85]{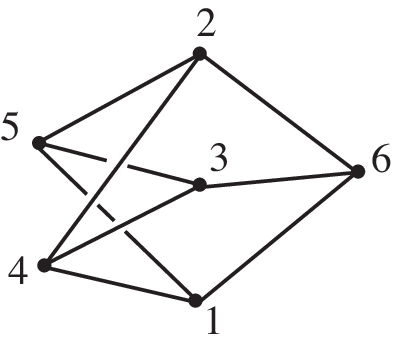}
\caption{$\mathrm{TSG}_+(\Gamma)\cong D_3\times D_3$.}
\label{D3D3}
\end{center}
\end{figure}

We begin with an embedding $\Gamma$ of $K_{3,3}$ in the form of a fan with three blades as shown in Figure~\ref{D3D3}.  We can think of $S^3$ as the union of two solid tori with the core of one solid torus containing the vertices $1$, $2$, and $3$, and the core of the other solid torus containing vertices $4$, $5$, and $6$.  The automorphisms $f$ and $g$ are induced on $\Gamma$ by order $3$ rotations about the cores of both solid tori. Turning over the graph about an axis containing the edge $\overline{36}$ induces the automorphism $\varphi$ on $\Gamma$.   Finally, the homeomorphism of $S^3$ which interchanges the two cores induces $\psi=(14)(25)(36)$.  Therefore, $\langle f,\varphi,g,\psi\rangle\cong D_3\times D_3\leq \mathrm{TSG}_+(\Gamma)$.  However, since the order of any positively realizable group for $K_{3,3}$ must be a proper divisor of 72 and the order of $D_3\times D_3$ is 36, we know that in fact $\mathrm{TSG}_+(\Gamma)\cong D_3\times D_3$.

Now, we modify the fan embedding $\Gamma$ illustrated in Figure~\ref{D3D3} by placing a non-invertible knot in each edge, giving us a new embedding $\Gamma_1$ of $K_{3,3}$ shown in Figure~\ref{Z3Z3semidirectZ2}.  Observe that the non-invertible knots are oriented from vertices $4$, $5$, and $6$ towards vertices $1$, $2$, and $3$.  Hence no orientation preserving homeomorphism of $(S^3,\Gamma_1)$ can interchange the set of vertices $\{1,2,3\}$ with the set of vertices $\{4,5,6\}$.  Also, we know that there is no transposition in $\mathrm{TSG}_+(\Gamma_1)$.  On the other hand, the automorphisms $f$, $g$, and $\varphi$ can be induced by homeomorphisms of $(S^3,\Gamma_1)$ as they were for $(S^3,\Gamma)$.  Recall that we have the relations $\varphi f\varphi=f^{-1}$ and $\varphi g\varphi=g^{-1}$.  Thus $\mathrm{TSG}_+(\Gamma_1)=\langle f,g,\varphi\rangle\cong (\mathbb Z_3\times\mathbb Z_3)\rtimes \mathbb Z_2$.

\begin{figure}[htbp]
\begin{center}
\includegraphics[scale=.85]{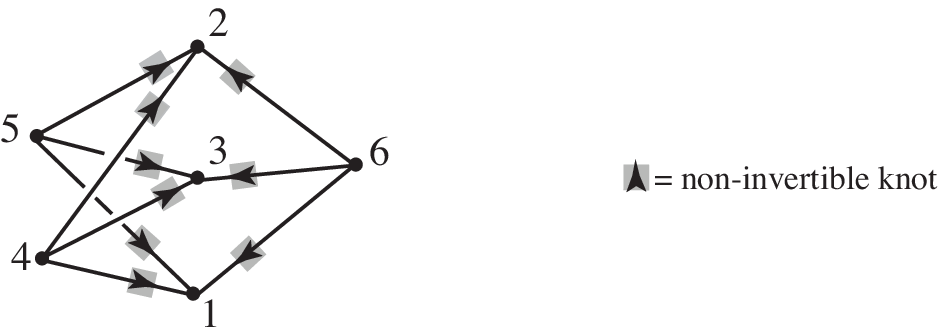}
\caption{$\mathrm{TSG}_+(\Gamma_1)\cong (\mathbb Z_3\times Z_3)\rtimes \mathbb Z_2$.}
\label{Z3Z3semidirectZ2}
\end{center}
\end{figure}

In order to find an embedding $\Gamma_2$ of $K_{3,3}$ such that $\mathrm{TSG}_+(\Gamma_2)\cong D_3\times \mathbb{Z}_3$, we again start with the embedding $\Gamma$ illustrated in Figure~\ref{D3D3}.  Now in a small neighborhood of each vertex $x\in \{1,2,3\}$ and $a\in \{4,5,6\}$ we embed the arcs of the edges as illustrated in Figure~\ref{knotted}.  This gives us an embedding $\Gamma_2$ of $K_{3,3}$.

\begin{figure}[here]
\begin{center}
\includegraphics[width=0.6\textwidth]{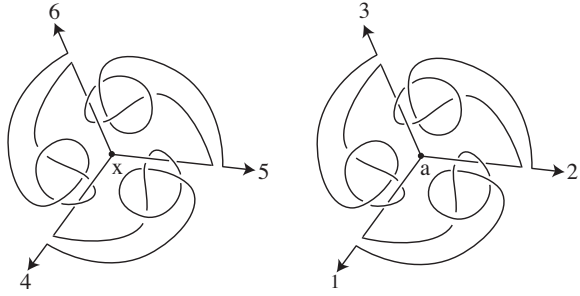}
\caption{These knots orient the three edges around each vertex.}
\label{knotted}
\end{center}
\end{figure}

If a local knot in an edge $e_1$ is linked with an edge $e_2$ (as $\overline{6x}$ is with $\overline{4x}$ in Figure~\ref{knotted}) we will say that $e_1$ is {\it knotted around} $e_2$.  Thus for each vertex $x\in \{1,2,3\}$, the edge $\overline {x4}$ is knotted around the edge $\overline{x5}$, which in turn is knotted around the edge $\overline{x6}$, which in turn is knotted around the edge $\overline{x4}$.  Similarly, for each vertex $a\in \{4,5,6\}$, the edge $\overline {a1}$ is knotted around the edge $\overline{a2}$, which in turn is knotted around the edge $\overline{a3}$, which in turn is knotted around the edge $\overline{a1}$.  On the other hand, none of the reverse knotting around relations hold. Suppose that there were some homeomorphism of $(S^3, \Gamma_2)$ that interchanged pairs of vertices in each of the sets $\{1,2,3\}$ and $\{4,5,6\}$.  Without loss of generality, suppose that some homeomorphism of $(S^3, \Gamma_2)$ induces the automorphism $\varphi=(12)(45)$.  Thus the edges $\overline{14}$ and $\overline{15}$, are mapped to the edges $\overline{25}$ and $\overline{24}$ respectively.  But this is impossible because the edge $\overline{14}$ is knotted around the edge $\overline{15}$, while the edge $\overline{25}$ is not knotted around the edge $\overline{24}$.  Thus no homeomorphism of $(S^3, \Gamma_2)$ interchanges pairs of vertices within the sets $\{1,2,3\}$ and $\{4,5,6\}$. 

The automorphisms $f$ and $g$ are induced on $\Gamma_2$ by order $3$ rotations about the cores of both solid tori as they were induced on $\Gamma$.  Figure~\ref{psiD3Z3} illustrates an isotopy of $S^3$ inducing the automorphism $\psi=(14)(25)(36)$ on the embedding $\Gamma$ while preserving the orientation of the edges around each vertex.  A similar isotopy induces $\psi$ on $\Gamma_2$ preserving the knots around the edges.  Thus $\psi \in \mathrm{TSG}_+(\Gamma_2)$.   It follows that $\mathrm{TSG}_+(\Gamma_2)=\langle f,g,\psi\rangle\cong D_3\times \mathbb{Z}_3$.

\begin{figure}[here]
\begin{center}
\includegraphics[scale=.8]{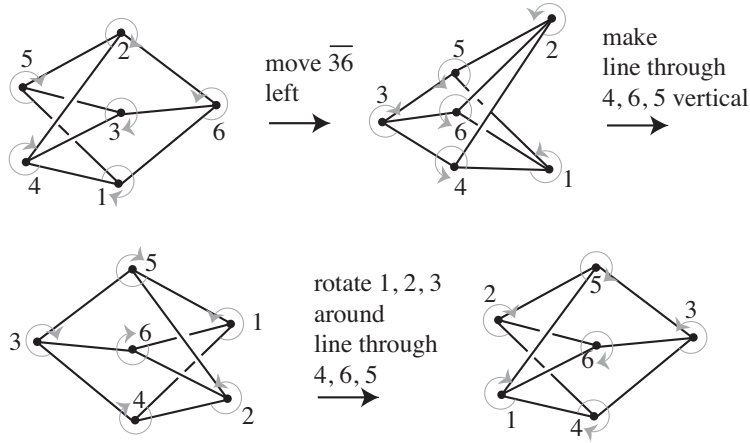}
\caption{This isotopy induces $\psi$ on $\Gamma$ preserving the orientation of the edges around each vertex.}
\label{psiD3Z3}
\end{center}
\end{figure}

Finally, consider the embedding $\Gamma_3$  obtained from the embedding $\Gamma_2$ by adding a non-invertible knot  to each edge oriented from the vertices $4$, $5$, and $6$ towards the vertices $1$, $2$, and $3$ as they were in Figure~\ref{Z3Z3semidirectZ2}.  The automorphisms $f$ and $g$ are induced by homeomorphisms of $(S^3,\Gamma_3)$.  However, now there is no homeomorphism of $(S^3, \Gamma_3)$ which interchanges the sets $\{1,2,3\}$ and $\{4,5,6\}$.  Thus $\mathrm{TSG}_+(\Gamma_3)=\langle f,g\rangle\cong \mathbb{Z}_3\times \mathbb{Z}_3$.
\end{proof}
\medskip

The following Corollary is a summary of our results for $M_3$.

\begin{corollary} All subgroups of $S_6$ containing no element of order $4$ or $5$ and no transposition are positively realized by $M_3$.\end{corollary}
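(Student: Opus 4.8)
The plan is to read the corollary off the enumeration of candidate groups together with the realizations already constructed, so that the whole argument becomes bookkeeping over a finite list. First I would recall from the subgroup data quoted from \cite{Pf} that the subgroups of $S_6$ containing no element of order $4$ or $5$ fall into exactly the eleven isomorphism types $D_6$, $D_3$, $D_2$, $\mathbb Z_6$, $\mathbb Z_3$, $\mathbb Z_2$, $D_3\times D_3$, $\mathbb Z_3\times\mathbb Z_3$, $(\mathbb Z_3\times\mathbb Z_3)\rtimes\mathbb Z_2$, $D_3\times\mathbb Z_3$, and $\mathbb Z_2\times\mathbb Z_2\times\mathbb Z_2$. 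Since $M_3=K_{3,3}$, it then suffices to show that each of these types, apart from the one forced to contain a transposition, is positively realizable for $K_{3,3}$.

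Next I would impose the \emph{no transposition} hypothesis in order to eliminate the single exceptional type. By Lemma~\ref{Z2Z2Z2}, every subgroup of $\mathrm{Aut}(K_{3,3})$ isomorphic to $\mathbb Z_2\times\mathbb Z_2\times\mathbb Z_2$ contains a transposition; in fact the same conclusion holds for any copy of $\mathbb Z_2\times\mathbb Z_2\times\mathbb Z_2$ inside $S_6$, because a faithful action of this group on six points can only distribute the points as $4+2$ or as $2+2+2$, and in each case a short orbit count shows some nontrivial element acts on the six points as a single transposition. Consequently a subgroup $G\le S_6$ satisfying the hypotheses of the corollary cannot be isomorphic to $\mathbb Z_2\times\mathbb Z_2\times\mathbb Z_2$, so $G$ is isomorphic to one of the remaining ten types.

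Finally I would invoke the constructions of the previous two theorems to realize each surviving type. Theorem~\ref{D6} exhibits embeddings realizing $D_6$ and its subgroups $D_3$, $D_2$, $\mathbb Z_6$, $\mathbb Z_3$, $\mathbb Z_2$, while Theorem~\ref{D3D3} exhibits embeddings realizing $D_3\times D_3$ together with its subgroups $(\mathbb Z_3\times\mathbb Z_3)\rtimes\mathbb Z_2$, $D_3\times\mathbb Z_3$, and $\mathbb Z_3\times\mathbb Z_3$. These ten groups are precisely the survivors of the previous paragraph, so every subgroup of $S_6$ with no element of order $4$ or $5$ and no transposition is positively realized by $M_3$, which is the assertion of the corollary.

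The one place to be careful is the exclusion of $\mathbb Z_2\times\mathbb Z_2\times\mathbb Z_2$, where the hypothesis must be matched against the correct ambient group. Lemma~\ref{Z2Z2Z2} is stated inside $\mathrm{Aut}(K_{3,3})$, so to apply it verbatim I would first note that positive realizability forces $G\le\mathrm{Aut}(K_{3,3})$; alternatively I would record the elementary $S_6$ statement above so that the corollary reads cleanly as a condition on subgroups of $S_6$. Everything else is a direct appeal to the enumeration and to the two realization theorems, so I expect no further obstacle.
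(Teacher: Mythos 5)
Your proposal is correct and follows essentially the same route as the paper, which presents this corollary as a direct summary of Section 2: the enumeration from \cite{Pf} restricted by the Nikkuni--Taniyama result, the exclusion of $\mathbb Z_2\times\mathbb Z_2\times\mathbb Z_2$ via Lemma~\ref{Z2Z2Z2}, and the realizations in Theorems~\ref{D6} and~\ref{D3D3}. Your added observation that every faithful copy of $\mathbb Z_2\times\mathbb Z_2\times\mathbb Z_2$ in $S_6$ (not just in $\mathrm{Aut}(K_{3,3})$) contains a transposition is a correct and slightly more careful reading of the hypothesis, but it does not change the argument.
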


\medskip


\section{Embeddings of $M_n$ for $n>3$.}

Simon~ \cite{Si} proved that for $n\geq 4$, every automorphism of $M_n$, leaves the $2n$-gon setwise invariant. Thus, for $n\geq 4$, $\mathrm{Aut}(M_n)\cong D_{2n}$, the dihedral group of order $4n$.  Note that every nontrivial subgroup of $D_{2n}$ is isomorphic to $D_k$ or $\mathbb Z_k$,  where $k$ divides $2n$.  We will now prove that each of these dihedral and cyclic groups is positively realizable for $M_n$.

\begin{theorem} For all $n\geq 4$, every subgroup of $D_{2n}$ is positively realizable for $M_n$.
\end{theorem}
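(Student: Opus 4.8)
The plan is to mimic the constructive strategy already used for $K_{3,3}$, but now exploiting the simpler structure $\mathrm{Aut}(M_n)\cong D_{2n}$ for $n\geq 4$. Since every nontrivial subgroup of $D_{2n}$ is either a cyclic group $\mathbb{Z}_k$ or a dihedral group $D_k$ with $k\mid 2n$, it suffices to produce, for each such $k$, one embedding realizing $D_k$ and one realizing $\mathbb{Z}_k$. The trivial group is already handled by the remark in the introduction, so I would begin each construction from a standard planar embedding of the $2n$-gon with the $n$ rungs stacked radially as unknotted antipodal chords, and then decorate the edges of the $2n$-gon with local knots. As in the $K_{3,3}$ case, the role of the knots is twofold: first, placing an invertible knot on \emph{every} rim edge forces any homeomorphism of the pair $(S^3,\Gamma)$ to carry the $2n$-gon to itself, so that $\mathrm{TSG}_+(\Gamma)\leq D_{2n}$; second, by choosing which edges receive \emph{equal} versus \emph{distinct} knots, and by choosing the knots to be invertible or non-invertible, one controls exactly which rotations and flips survive. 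The results of \cite{FMN} quoted in the excerpt justify that these knotting operations are well defined on the $3$-connected graph $M_n$ and that any homeomorphism must respect the knot type and, for non-invertible knots, the orientation.

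The concrete realization scheme I would use is the following. Fix a divisor $k$ of $2n$ and write the rim edges of the $2n$-gon cyclically as $e_0,e_1,\dots,e_{2n-1}$. To realize $\mathbb{Z}_k$, assign to edge $e_i$ a knot depending only on $i \bmod (2n/k)$, choosing $2n/k$ distinct non-invertible knots and orienting them consistently around the polygon. The resulting $\mathbb{Z}_{2n/k}$-periodic invariant pattern admits the rotation by $2\pi k/(2n)$ (inducing an order-$k$ rotation of the $2n$-gon), realized by an ambient rotation of $S^3$ combined with a suitable meridional rotation and reisotopy of the knots, exactly as in the construction of $f\psi$ in Theorem~\ref{D6}. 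Because the knots are non-invertible, no orientation preserving homeomorphism can reverse the cyclic order, so no reflection survives and $\mathrm{TSG}_+(\Gamma)\cong\mathbb{Z}_k$. To realize $D_k$ instead, I would make the same edges carry \emph{invertible} knots with the identical periodic pattern, and position a reflection axis of the $2n$-gon so that the pattern is symmetric about it; then in addition to the order-$k$ rotation one gets a flip realized by rotating $S^3$ about an axis through the fixed rung(s), inducing a reflection of the polygon, so that $\mathrm{TSG}_+(\Gamma)\cong D_k$.

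The two remaining tasks are to verify the upper and lower bounds match. For the lower bound, each named automorphism must genuinely be induced by a homeomorphism of $S^3$; here I rely on the rotation-plus-meridional-rotation device and on ambient flips about axes through rungs, verifying in each case that the isotopy returns the decorated edges to their starting configuration (this is the same mechanism the paper uses repeatedly). For the upper bound, the knotting guarantees $\mathrm{TSG}_+(\Gamma)\leq D_{2n}$, and then a counting argument on the knot-pattern's stabilizer inside $D_{2n}$ pins down the subgroup exactly: the subgroup of $D_{2n}$ preserving the assigned (oriented, for the cyclic case) knot pattern is precisely $\mathbb{Z}_k$ or $D_k$.

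The main obstacle I expect is the upper-bound bookkeeping for the dihedral families, specifically choosing the reflection axis and the periodic pattern so that \emph{exactly} $k$ reflections are admissible and no extra rotational symmetry sneaks in. For $D_{2n}$ the flip axes come in two geometric types (through opposite vertices versus through opposite edge-midpoints of the $2n$-gon), and for $M_n$ some of these axes pass through rungs while others do not, which affects whether a given reflection extends to an orientation preserving homeomorphism of $S^3$ fixing the embedding. Getting the parity of $k$ and $2n/k$ to cooperate with the available axes — so that the stabilizer of the knot pattern is the intended $D_k$ and not a larger or smaller group — is the delicate point, and I would organize the proof by cases on this parity, handling the rotation-only ($\mathbb{Z}_k$) subgroups uniformly first and then the dihedral subgroups case by case.
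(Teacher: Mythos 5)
Your overall strategy is the same as the paper's: begin with a maximally symmetric embedding of $M_n$ (rim in a plane, rungs stacked so that both a glide rotation of order $2n$ and a turn-over act), then kill unwanted symmetries by decorating rim edges with local knots, using non-invertibility to exclude reflections and the placement pattern to restrict rotations. Your $\mathbb{Z}_k$ construction is sound and differs from the paper's only cosmetically: the paper places one fixed non-invertible knot on every $(2n/k)$-th rim edge and leaves the others unknotted, rather than tiling all $2n$ rim edges with $2n/k$ distinct non-invertible knots, but both patterns have rotational stabilizer exactly $\mathbb{Z}_k$ and both use non-invertibility to rule out flips. (Also note that for $n\geq 4$ the containment $\mathrm{TSG}_+(\Gamma)\leq D_{2n}$ is automatic from Simon's theorem that every automorphism of $M_n$ preserves the $2n$-gon, so the knots are not needed for that step.)

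The gap is in your $D_k$ construction. You propose to keep ``the identical periodic pattern'' of $m=2n/k$ distinct knot types, now invertible, and to ``position a reflection axis so that the pattern is symmetric about it.'' But a cyclic labelling of the rim edges obtained by repeating a block of $m$ pairwise distinct knot types is never invariant under any reflection once $m\geq 3$: every reflection of the $2n$-gon sends $e_i$ to $e_{c-i}$ for some fixed $c$, so invariance of the labelling forces $i\equiv c-i \pmod m$ for every $i$, which is impossible. No choice of axis, and no parity analysis of $k$ versus $2n/k$, can repair this; your decorated embedding would again realize $\mathbb{Z}_k$ rather than $D_k$. The delicate point you flag (which type of flip axis is available) is not where the difficulty lies --- the difficulty is that your pattern has no reflective symmetry at all. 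The fix is exactly what the paper does: place the \emph{same} invertible knot on a single rotation orbit of rim edges, $e_1, e_{1+m},\dots,e_{2n-m}$, and leave the remaining rim edges unknotted. That subset is preserved by precisely the $k$ rotations through multiples of $m$ positions and by precisely $k$ of the $2n$ reflections, so its stabilizer in $D_{2n}$ is $D_k$, and each of these symmetries is realized by the glide rotation or the turn-over of the base embedding followed by an isotopy restoring the knots.
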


\begin{proof}  Consider the embedding of $M_4$ shown in Figure~\ref{m4_1}.   We can obtain a similar embedding for any $M_5$ by adding a vertex between vertices $1$ and $8$ and a vertex between vertices $4$ and $5$, and a new rung that goes above $\overline{15}$.  By repeating this process we obtain a similar embedding $\Gamma$ of $M_n$ for any $n\geq 4$.

\begin{figure}[htbp]
\begin{center}
\includegraphics[scale=.5]{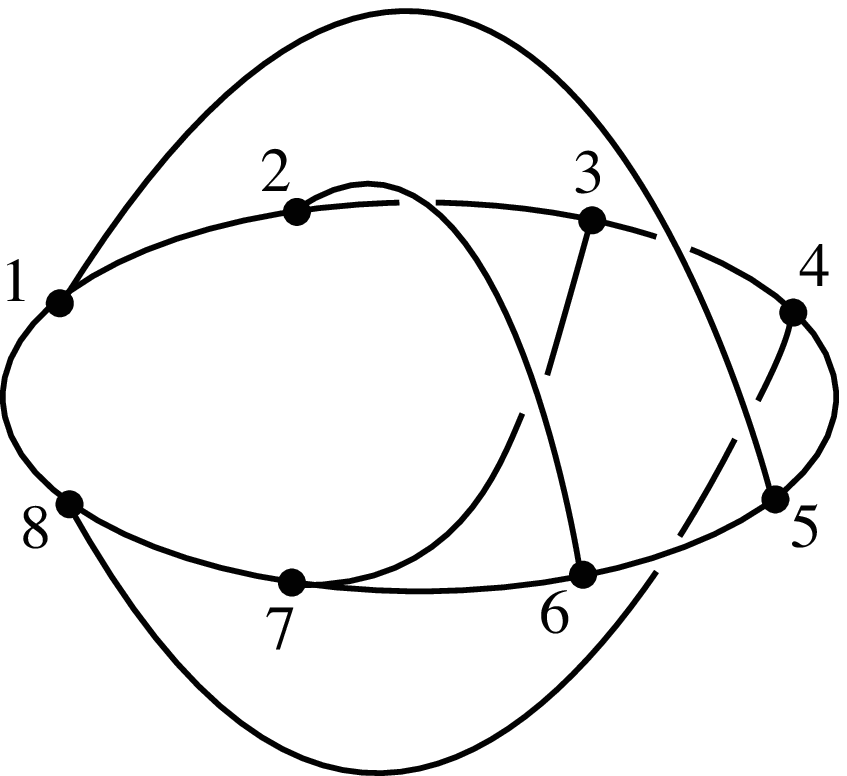}
\caption{$\mathrm{TSG}_+(\Gamma)= D_{2n}$}
\label{m4_1}
\end{center}
\end{figure}

Let $\alpha$ be a homeomorphism of $(S^3, \Gamma)$ which turns $\Gamma$ over.  Note that $\alpha$ pointwise fixes an edge precisely when $n$ is odd.  Let $\beta$ be a glide rotation of $S^3$ which rotates the $2n$-gon by $\frac{\pi}{n}$ and rotates meridionally around the $2n$-gon by $\frac{2\pi}{n}$ taking each rung to the next lower rung.  Then $\beta$ takes $\Gamma$ to itself inducing an order $2n$ automorphism on $\Gamma$.  It follows that $\mathrm{TSG}_+(\Gamma)= D_{2n}$.

Next we let $\Omega_1$ be obtained from $\Gamma$ by adding the same non-invertible knot to every edge of the $2n$-gon.  Let $\beta_1$ be the homeomorphism of $(S^3, \Omega_1)$ which rotates the $2n$-gon by $\frac{\pi}{n}$, rotates meridionally around the $2n$-gon by $\frac{2\pi}{n}$, and then isotopes the knots back into position.  Because of the non-invertible knots, the $2n$-gon cannot be turned over.  Thus, $\mathrm{TSG}_+(\Omega_1)\cong \mathbb{Z}_{2n}$.

In general, let $k$ be a nontrivial divisor of $2n$, and let $m=2n/k$.  Let $e_1$, \dots, $e_{2n}$ denote consecutive edges that make up the $2n$-gon.  Starting with the embedding $\Gamma$ add equivalent non-invertible knots to each of the edges $e_1$, $e_{1+m}$, \dots, $e_{2n-m}$ to obtain an embedding $\Omega_m$ of $M_n$.  Let $\beta_m$ be the homeomorphism of $(S^3, \Omega_m)$ which rotates the $2n$-gon by $\frac{m\pi}{n}$, rotates meridionally around the $2n$-gon by $\frac{2m\pi}{n}$, and then isotopes the knots back into position. Then $\beta_m$ will take $\Omega_m$ to itself and induce an automorphism of $M_n$ of order $k$.  Now, because the knots are non-invertible, no homeomorphism of $(S^3, \Omega_m)$ will turn the $2n$-gon over.  Also because of the knots, any homeomorphism of $(S^3, \Omega_m)$ induces a rotation of the $2n$-gon whose order is a factor of $k$.  Thus, $\mathrm{TSG}_+(\Omega_m)\cong \mathbb{Z}_{k}$.

Finally, let $\Gamma_m$ be obtained from the embedding $\Omega_m$ by replacing the non-invertible knots in $\Omega_m$ with equivalent invertible knots.  Then there is a homeomorphism of $(S^3, \Gamma_m)$ inducing an automorphism of $M_n$ of order $k$, and there is a homeomorphism of $(S^3,\Gamma_m)$ which turns the $2n$-gon over.  Thus $\mathrm{TSG}_+(\Omega_m)\cong D_{k}$.\end{proof}


{}

\end{document}